\documentclass[11pt,a4paper]{article}

\usepackage{titlesec}
\usepackage{fancyhdr}
\usepackage{a4wide}
\usepackage{graphicx}
\usepackage{float}
\usepackage{amssymb}
\usepackage{amsmath}
\usepackage{amsfonts}
\usepackage{amsthm}
\usepackage{color}
\usepackage{mathrsfs}
\usepackage{array}
\usepackage{eucal}
\usepackage{tikz}
\usepackage[T1]{fontenc}
\usepackage{inputenc}
\usepackage[english]{babel}
\usepackage{lmodern}
\usepackage{hyperref}
\usepackage{geometry}
\usepackage{changepage}
\usepackage{bm}
\geometry{hmargin=2.2cm, vmargin=2.2cm }
\changepage{0pt}{}{}{}{}{0pt}{}{0pt}{6pt}
\usepackage[numbers]{natbib}
\setlength{\bibsep}{0.0pt}

\usepackage{hyperref}
\hypersetup{
pdfpagemode=none,
pdftoolbar=true,        
pdfmenubar=true,        
pdffitwindow=false,     
pdfstartview={Fit},    
pdftitle={Design of an acoustic energy distributor using thin resonant slits},    
pdfauthor={L. Chesnel, S.A. Nazarov},     
pdfsubject={},  
pdfcreator={L. Chesnel, S.A. Nazarov},   
pdfproducer={L. Chesnel, S.A. Nazarov}, 
pdfkeywords={}, 
pdfnewwindow=true,      
colorlinks=true,       
linkcolor=magenta,          
citecolor=red,        
filecolor=cyan,      
urlcolor=blue           
}

\newcommand{\dsp}{\displaystyle}

\newcommand{\eps}{\varepsilon}
\newcommand{\om}{\omega}
\newcommand{\Om}{\Omega}
\newcommand{\mrm}[1]{\mathrm{#1}}

\newcommand{\Cplx}{\mathbb{C}}
\newcommand{\N}{\mathbb{N}}
\newcommand{\R}{\mathbb{R}}

\newtheorem{theorem}{Theorem}[section]
\newtheorem{lemma}[theorem]{Lemma}
\newtheorem{remark}[theorem]{Remark}

\begin{document}

~\vspace{0.0cm}
\begin{center}
{\sc \bf\huge Design of an acoustic energy distributor\\[6pt] using thin resonant slits}
\end{center}

\begin{center}
\textsc{Lucas Chesnel}$^1$, \textsc{Sergei A. Nazarov}$^{2}$\\[16pt]
\begin{minipage}{0.95\textwidth}
{\small
$^1$ INRIA/Centre de math\'ematiques appliqu\'ees, \'Ecole Polytechnique, Institut Polytechnique de Paris, Route de Saclay, 91128 Palaiseau, France;\\
$^2$ Institute of Problems of Mechanical Engineering, Russian Academy of Sciences, V.O., Bolshoj pr., 61, St. Petersburg, 199178, Russia;\\[10pt]
E-mails: \texttt{lucas.chesnel@inria.fr}, \texttt{srgnazarov@yahoo.co.uk}, \texttt{s.nazarov@spbu.ru} \\[-14pt]

\begin{center}
(\today)
\end{center}
}
\end{minipage}
\end{center}
\vspace{0.4cm}

\noindent\textbf{Abstract.} 
We consider the propagation of time harmonic acoustic waves in a device made of three unbounded channels connected by thin slits. The wave number is chosen such that only one mode can propagate. The main goal of this work is to present a device which can serve as an energy distributor. More precisely, the geometry is first designed so that for an incident wave coming from one channel, the energy is almost completely transmitted in the two other channels. Additionally, adjusting slightly two geometrical parameters, we can control the ratio of energy transmitted in the two channels. The approach is based on asymptotic analysis for thin slits around resonance lengths. We also provide numerical results to illustrate the theory.\\

\noindent\textbf{Key words.} Acoustic waveguide, energy distributor, asymptotic analysis, thin slit, scattering coefficients, complex resonance.

\section{Introduction}\label{Introduction}

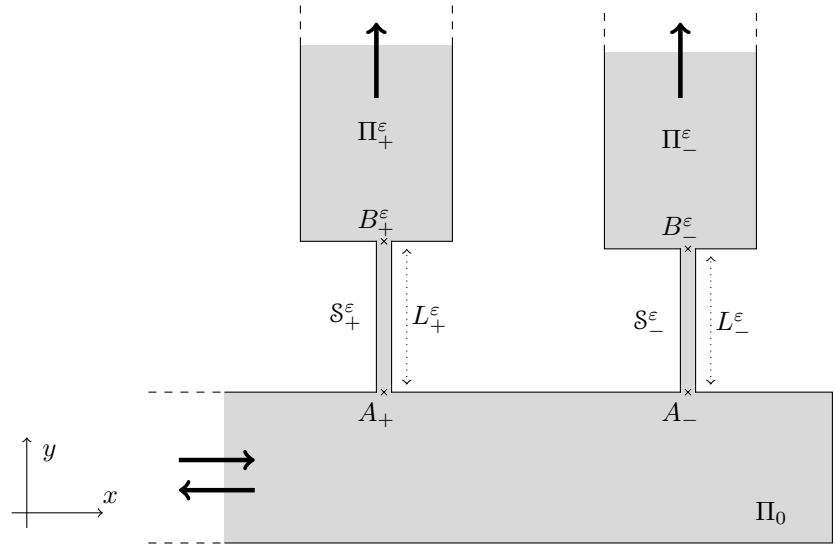
\begin{figure}[!ht]
\centering
\begin{tikzpicture}[scale=2]
\draw[fill=gray!30,draw=none](-4,0) rectangle (0,1);
\draw (-4,0)--(0,0)--(0,1)--(-4,1);
\draw[dashed] (-4.5,0)--(-4,0);
\draw[dashed] (-4.5,1)--(-4,1);
\node at (-0.4,0.2){\small $\Pi_0$};
\begin{scope}[xshift=-2cm,yshift=0cm]
\draw[fill=gray!30,draw=none](-1.5,3.3) rectangle (-0.5,2);
\draw (-1.5,3.3)--(-1.5,2)--(-0.5,2)--(-0.5,3.3);
\draw[dashed] (-1.5,3.3)--(-1.5,3.6);
\draw[dashed] (-0.5,3.3)--(-0.5,3.6);
\draw[fill=gray!30,draw=black](-1,1) rectangle (-0.9,2);
\draw[fill=gray!30,draw=none](-0.995,0.9) rectangle (-0.905,2.1);
\draw (-0.97,0.98)--(-0.93,1.02);
\draw (-0.97,1.02)--(-0.93,0.98);
\node at (-1.2,1.5){\small $\mathcal{S}^{\eps}_{+}$};
\draw[dotted,<->] (-0.8,1.05)--(-0.8,1.95);
\node at (-0.65,1.48){\small $L^{\eps}_{+}$};
\begin{scope}[yshift=1cm]
\draw (-0.97,0.98)--(-0.93,1.02);
\draw (-0.97,1.02)--(-0.93,0.98);
\end{scope}
\node at (-1,0.85){\small $A_{+}$};
\node at (-1,2.12){\small $B^{\eps}_{+}$};
\node at (-1,2.7){\small $\Pi^{\eps}_+$};
\end{scope}
\begin{scope}[xshift=0cm,yshift=0cm]
\begin{scope}[xshift=0cm,yshift=-0.05cm]
\draw[fill=gray!30,draw=none](-1.5,3.3) rectangle (-0.5,2);
\draw (-1.5,3.3)--(-1.5,2)--(-0.5,2)--(-0.5,3.3);
\draw[dashed] (-1.5,3.3)--(-1.5,3.6);
\draw[dashed] (-0.5,3.3)--(-0.5,3.6);
\draw[fill=gray!30,draw=black](-1,1.05) rectangle (-0.9,2);
\draw[fill=gray!30,draw=none](-0.995,0.9) rectangle (-0.905,2.1);
\node at (-1,2.12){\small $B^{\eps}_{-}$};
\node at (-1,2.7){\small $\Pi^{\eps}_-$};
\draw[->,line width=0.6mm] (-4.3,0.6)--(-3.8,0.6);
\draw[<-,line width=0.6mm] (-4.3,0.4)--(-3.8,0.4);
\draw[->,line width=0.6mm] (-1,3)--(-1,3.5);
\draw[->,line width=0.6mm] (-3,3)--(-3,3.5);
\begin{scope}[yshift=1cm]
\draw (-0.97,0.98)--(-0.93,1.02);
\draw (-0.97,1.02)--(-0.93,0.98);
\end{scope}
\end{scope}
\draw (-0.97,0.98)--(-0.93,1.02);
\draw (-0.97,1.02)--(-0.93,0.98);
\node at (-1.2,1.45){\small $\mathcal{S}^{\eps}_{-}$};
\draw[dotted,<->] (-0.8,1.05)--(-0.8,1.9);
\node at (-0.65,1.44){\small $L^{\eps}_{-}$};
\node at (-1,0.85){\small $A_{-}$};
\end{scope}
\begin{scope}[shift={(-5.4,0)}]
\draw[->] (0,0.2)--(0.6,0.2);
\draw[->] (0.1,0.1)--(0.1,0.7);
\node at (0.65,0.3){\small $x$};
\node at (0.25,0.6){\small $y$};
\end{scope}
\end{tikzpicture}
\caption{Geometry of the waveguide $\Om^{\eps}$. \label{Waveguide}} 
\end{figure}

In this article, we are interested in the design of an acoustic energy distributor. More precisely, we study the propagation of time harmonic waves at a given wave number in a structure with three unbounded channels. One of them plays the role of input/output channel while the two others are only output channels (see Figure \ref{Waveguide}). Our goal is to find a geometry where the energy of an incoming wave is almost completely transmitted and additionally where we can control the ratio of energy transmitted in the two other output channels. The main difficulty of this problem lies in the fact that the dependence of the acoustic field with respect to the geometry is nonlinear and implicit. A device similar to the one that we wish to create is the acoustic power divider \cite{LaLI05,AnEl05,NgCa07,EsBA20}. The difference is that in our case, we want to be able to control the ratio of energy transmitted in the two output channels. We mention that such devices are very interesting for applications not only in acoustics but also for example in optics \cite{PLLPBV15,FESJY13} or in radio electronics \cite{ChWu11} (see the literature concerning radio diplexers). \\
\newline
To construct such particular waveguides, it has been proposed to work with so-called zero-index materials or similar metamaterials, see for example \cite{EsBA20} and the references therein. However, from our understanding, these materials are still hard to handle in practice. In this article, we propose a different approach relying on the use of a classical medium but with a well-chosen shape. More precisely, we will work with thin slits as illustrated in Figure \ref{Waveguide}. In general, due to the geometrical features, almost no energy passes through the slits and it may seem a bit paradoxical to use them to have almost complete transmission. However working around the resonance lengths, it has been shown that we can observe this phenomenon. This has been studied for example in \cite{Krieg,BoTr10,LiZh17,LiZh18,LiSZ19} in the context of the scattering of an incident wave by a periodic array of subwavelength slits. The approach that we will consider is based on matched asymptotic expansions. For related techniques, we refer the reader to \cite{Beal73,Gady93,KoMM94,Naza96,Gady05,Naza05,JoTo06,BaNa15,BoCN18}. We emphasize that an important feature of our work distinguishing it from the previous references is that the lengths, and not only the widths, of the slits depend on $\eps$ (see (\ref{DefL})). This way of considering the problem is an essential ingredient of the analysis. From this respect, our work shares similarities with \cite{HoSc19,BrHS20,BrSc20} (see also references therein). The difference is that we combine several thin slits and coupling effect can appear.\\
\newline
Let us mention that techniques of optimization (see e.g. \cite{LDOHG19,LGHDO19,Lebb19}) have been applied to exhibit energy acoustic distributors. However they involve non convex functionals and unsatisfactory local minima exist. Moreover, they offer no control on the obtained shape compare to the approach we propose here. In particular, with our geometry, a small change of the geometry allows us to transmit the energy in one channel instead of the other. In the context of propagation of acoustic waves, another device which is interesting in practice is the modal converter \cite{HoHu05,CXJD06,KTSM09}. At higher frequency, when several modes can propagate, the aim is to have a structure where the energy of an incident mode is transferred onto another mode. We do not know if thin slits can be useful to obtain such effect.\\
\newline 
The outline is as follows. In the next section, we present the geometry and the notation. Then in Section \ref{SectionAux}, we introduce two auxiliary problems which will be involved in the analysis. The Section \ref{SectionAsympto} constitutes the heart of the article: here we compute an asymptotic expansion of the acoustic field and of the scattering coefficients with respect to $\eps$, the width of the thin slits. Then we exploit the results in Section \ref{AnalysisResults} to exhibit situations where the device acts as an energy distributor. We illustrate the theory in Section \ref{SectionNumerics} with numerical experiments before discussing possible extensions and open
questions in Section \ref{SectionConclusion}. Finally we give the proof of two technical lemmas needed in the study in a short appendix.

\section{Setting}

First, we describe in detail the geometry (see Figure \ref{Waveguide}). Set $\Pi_0:=\{z=(x,y)\in\R^2\,|\,(x,y)\in(-\infty;0)\times(0;1)\}$. Pick two different points $A_{\pm}=(p_\pm,1)\in(-\infty;0)\times\{1\}\subset\partial\Pi_0$. For $\eps>0$, define the lengths
\begin{equation}\label{DefL}
L^\eps_{\pm}:=L_{\pm}+\eps L^\prime_{\pm}
\end{equation}
where the values $L_{\pm}>0$, $L^\prime_\pm>0$ will be fixed later on to observe interesting phenomena. Define the thin strips 
\[
\mathcal{S}^{\eps}_{\pm}:=(p_{\pm}-\eps/2;p_{\pm}+\eps/2)\times[1;1+L_{\pm}^{\eps}].
\]
Define the points $B^{\eps}_{\pm}$ such that $B^{\eps}_{\pm}=(p_{\pm},1+L_{\pm}^{\eps})$. Set 
\[
\Pi^{\eps}_{\pm}:=(p_{\pm}-1/2;p_{\pm}+1/2)\times(1+L_{\pm}^{\eps};+\infty).
\]
And finally, we define the geometry
\[
\Om^{\eps}:=\Pi_0\cup\mathcal{S}^{\eps}_{+}\cup\mathcal{S}^{\eps}_{-}\cup \Pi^{\eps}_{+}\cup \Pi^{\eps}_{-}.
\]
Interpreting the domain $\Om^{\eps}$ as an acoustic waveguide, we are led to consider the following problem with Neumann boundary condition
\begin{equation}\label{MainPb}
 \begin{array}{|rcll}
 \Delta u^\eps +\omega^2  u^\eps&=&0&\mbox{ in }\Omega^\eps\\
 \partial_\nu u^\eps &=&0 &\mbox{ on }\partial\Omega^\eps.
\end{array}
\end{equation}
Here, $\Delta$ is the Laplace operator while $\partial_\nu$ corresponds to the derivative along the exterior normal. Furthermore, $u^\eps$ is the acoustic pressure of the medium while $\omega>0$ is the wave number of the plane modes $\mrm{w}_{h}^\pm(x,y)=e^{\pm i\omega x}$ (resp. $\mrm{w}_{v}^\pm(x,y)=e^{\pm i\omega y}$) propagating in $\Pi_0$ (resp. $\Pi^{\eps}_{\pm}$). We fix $\om\in(0;\pi)$ so that no other mode can propagate. We are interested in the solution to the diffraction problem \eqref{MainPb} generated by the incoming wave $\mrm{w}^+_{h}$ in the trunk $\Pi_0$. This solution admits the decomposition
\begin{equation}\label{Field}
 u^\eps(x,y)=\begin{array}{|ll}
\mrm{w}^+_h(x)+R^\eps\,\mrm{w}^-_h(x)+\dots \quad\mbox{ in }\Pi_0\\[3pt]
\phantom{\mrm{w}^+_h(x)\,\,\quad} T^\eps_+\,\mrm{w}^+_v(y-L^{\eps})+\dots \quad\mbox{ in }\Pi_{+}^\eps\\[3pt]
\phantom{\mrm{w}^+_h(x)\,\,\quad} T^\eps_-\,\mrm{w}^+_v(y-L^{\eps})+\dots \quad\mbox{ in }\Pi_{-}^\eps
 \end{array}
\end{equation}
where $R^\eps\in\mathbb{C}$ is a reflection coefficient and $T^\eps_{\pm}\in\mathbb{C}$ are transmission coefficients. In this decomposition, the ellipsis stand for a remainder which decays at infinity with the rate $e^{-(\pi^2-\om^2)^{1/2}|x|}$ in $\Pi_0$ and $e^{-(\pi^2-\om^2)^{1/2}|y|}$ in $\Pi_{\pm}^\eps$. Due to conservation of energy, one has
\[
|R^\eps|^2+|T_+^\eps|^2+|T_-^\eps|^2=1.
\]
In general, almost no energy of the incident wave $\mrm{w}^+_h$ passes through the thin strips and one observes almost complete reflection. More precisely, one finds that there holds 
\begin{equation}\label{DefExpanNonCri}
 R^\eps  =1+\tilde{R}^{\,\eps},\qquad\qquad T^\eps_{\pm}  =\tilde{T}^{\,\eps}_{\pm},
\end{equation}
 where  $\tilde{R}^{\eps}$, $\tilde{T}^{\eps}_{\pm}$ tend to zero as $\eps$ goes to zero. The main goal of this work is to show that choosing carefully the lengths $L^{\eps}_{\pm}$ of the thin strips $\mathcal{S}^{\eps}_{\pm}$ as well as their positions, the energy of the wave $\mrm{w}^+_h$ can be almost completely transmitted. Moreover we can control the energy transmitted respectively in $T_+^\eps$ and $T_-^\eps$. More precisely, we will prove that choosing carefully $L^{\eps}_{\pm}$, as $\eps$ tends to zero we can have
 \[
R^\eps
 =\tilde{R}^{\,\eps},\qquad\qquad T^\eps_{\pm} =T^0_{\pm}+\tilde{T}^{\,\eps}, 
\]
where $\tilde{R}^{\eps}$, $\tilde{T}^{\eps}_{\pm}$ tend to zero as $\eps$ goes to zero, $|T^0_{+}|^2+|T^0_{-}|^2=1$ and $|T^0_{+}|/|T^0_{-}|$ can be any number in $(0;+\infty)$ (see formulas \eqref{FinalDistributor} below). Thus we can select the energy ratio transmitted in $\Pi_{\pm}^\eps$ and the device acts as an energy distributor. 
\section{Auxiliary objects} \label{SectionAux}
In this section, we discuss a couple of boundary value problems whose solutions will appear in the construction of the asymptotic expansions of the acoustic field $u^{\eps}$.\\
\newline
Considering the limit $\eps\rightarrow0^+$ in the equation \eqref{MainPb} restricted to the strips $\mathcal{S}^{\eps}_{\pm}$, we are led to study the one-dimensional Helmholtz equations
\begin{equation}\label{Pb1D1}
\partial^2_{y}v+\omega^2v=0\qquad\mbox{ in }\ell_{\pm}:=(1;1+L_{\pm})
\end{equation}
supplied with the artificially imposed Dirichlet conditions
\begin{equation}\label{Pb1D2}
  v(1)=v(1+L_{\pm})=0.
\end{equation}
Eigenvalues and eigenfunctions (up to a multiplicative constant) of the boundary value problem \eqref{Pb1D1}--\eqref{Pb1D2} are given by
\[
\dsp\mu_{m}:=(\pi m/L_{\pm})^2,\qquad v_{m}(y)=\sin(\pi m(y-1)/L_{\pm}),
\]
with $m\in\N^{\ast}:=\{1,2,3,\dots\}$. \\

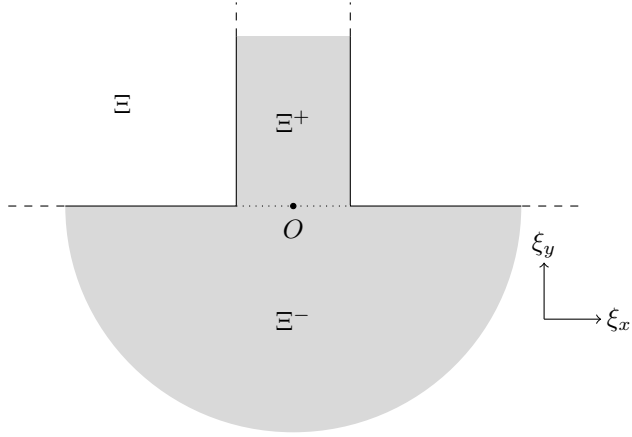
\begin{figure}[!ht]
\centering
\begin{tikzpicture}[scale=1.5]
\begin{scope}
\clip (-2,2) -- (2,2) -- (2,0) -- (-2,0) -- cycle;
\draw[fill=gray!30,draw=none](0,2) circle (2);
\end{scope}
\draw[fill=gray!30,draw=none](-0.5,2) rectangle (0.5,3.5);
\draw (-2,2)--(-0.5,2)--(-0.5,3.5);
\draw (2,2)--(0.5,2)--(0.5,3.5);
\draw[dashed] (-0.5,3.5)--(-0.5,3.8);
\draw[dashed] (0.5,3.5)--(0.5,3.8);
\draw[dashed] (2.5,2)--(2,2);
\draw[dashed] (-2.5,2)--(-2,2);
\draw[thick,draw=gray!30](-0.495,2)--(0.495,2);
\node[mark size=1pt,color=black] at (0,2) {\pgfuseplotmark{*}};
\node[color=black] at (0,1.8) {\small $O$};
\begin{scope}[shift={(2.2,1)}]
\draw[->] (0,0)--(0.5,0);
\draw[->] (0,0)--(0,0.5);
\node at (0.66,0){\small$\xi_x$};
\node at (0,0.66){\small$\xi_y$};
\end{scope}
\node at (0,1){\small$\Xi^-$};
\node at (0,2.75){\small$\Xi^+$};
\node at (-1.5,2.9){\small$\Xi$}; 
\node at (2.5,2.9){\small\phantom{$\Xi=\R^2_-\cup\Pi\cup(-1/2;1/2)\times\{0\}$}};
\draw[dotted] (-0.5,2)--(0.5,2);
\end{tikzpicture}
\caption{Geometry of the frozen domain $\Xi$.\label{FrozenGeom}} 
\end{figure}

\noindent Now we present a second problem which is involved in the construction of asymptotics and which will be used to describe the boundary layer phenomenon near the points $A_{\pm}$, $B^{\eps}_{\pm}$. To capture rapid variations of the field for example in the vicinity of $A_{\pm}$, we introduce the stretched coordinates $\xi^\pm=(\xi^\pm_x,\xi^\pm_y)=\eps^{-1}(z-A_\pm)=(\eps^{-1}(x-p_{\pm}),\,\eps^{-1}(y-1))$. Observing that 
\begin{equation}\label{Strechted1}
(\Delta_z+\om^2)u^{\eps}(\eps^{-1}(z-A_\pm))=\eps^{-2}\Delta_{\xi^{\pm}}u^{\eps}(\xi^{\pm})+\dots,
\end{equation}
we are led to consider the Neumann problem 
\begin{equation}\label{PbBoundaryLayer}
-\Delta_\xi Y=0\qquad\mbox{ in }\Xi,\quad\qquad \partial_\nu Y=0\quad\mbox{ on }\partial\Xi,
\end{equation}
where $\Xi:=\Xi^-\cup\Xi^+\subset\mathbb{R}^2$ (see Figure \ref{FrozenGeom}) is the union of the half-plane $\Xi^-$ and the semi-strip $\Xi^+$ such that
\[
\Xi^-:={\mathbb R}^2_-=\{\xi=(\xi_x,\xi_y):\,\xi_y<0\},\qquad\qquad
\Xi^+:=\{\xi:\,\xi_y\geq 0, |\xi_x|<1/2\}.
\]
In the method of matched asymptotic expansions (see
the monographs \cite{VD,Ilin}, \cite[Chpt. 2]{MaNaPl} and others) that we will use, we will work with solutions of \eqref{PbBoundaryLayer} which are bounded or which have polynomial growth in the semi-strip as $\xi_y\rightarrow+\infty$. One of such solutions is evident and is given by $Y^0=1$.
Another solution, which is linearly independent with $Y^0$, is the unique function satisfying (\ref{PbBoundaryLayer}) and which has the representation
\begin{equation}\label{PolyGrowth}
Y^1(\xi)=\left\{
\begin{array}{ll}
\xi_y+C_\Xi+O(e^{-\pi \xi_y})& \mbox{ as }\xi_y\rightarrow+\infty,\quad \xi
\in \Xi^+\\[5pt]
\dsp\frac{1}{\pi}\ln \frac{1}{|\xi|}+O
 \Big(\frac{1}{|\xi|}\Big)& \mbox{ as }|\xi|\rightarrow+\infty,\quad \xi
\in \Xi^-.
\end{array}\right.
\end{equation}
Here, $C_\Xi$ is a universal constant whose value can be computed using conformal mapping, see for example \cite{Schn17}. Note that the coefficients in front of the growing terms in \eqref{PolyGrowth} are related due to the fact that a harmonic function has zero total flux at infinity.

\section{Asymptotic analysis}\label{SectionAsympto}
In this section, we compute the asymptotic expansion of the field $u^{\eps}$ in (\ref{Field}) as $\eps$ tends to zero. The final results are summarized in (\ref{AsymptoFinalResults}). We assume that the limit lengths $L_{\pm}$ of the thin strips (see (\ref{DefL})) are such that
\begin{equation}\label{ConditionLongueur}
L_{\pm} = \frac{\pi m_{\pm}}{\om} \qquad\mbox{for some}\,\,m_{\pm}\in\N^{\ast}.
\end{equation}
In other words, we assume that $\omega^2$ is an eigenvalue of the problems \eqref{Pb1D1}--\eqref{Pb1D2}. We emphasize that these problems are posed in the fixed lines $\ell_{\pm}$ but the true lengths $L^\eps_{\pm}=L_{\pm}+\eps L'_{\pm}$ of the strips $\mathcal{S}^\eps_{\pm}$ depend on the parameter $\eps$. In the channels, we work with the ansatz
\begin{equation}\label{AnsatzWaveguides}
u^\eps=u^0_0+\eps u^\prime _0+\dots \quad\mbox{\rm in }\Pi_0,\qquad
u^\eps=u^0_\pm+\eps u^\prime _\pm+\dots \quad\mbox{\rm in }\Pi_\pm^{\eps},
\end{equation}
while in the thin strips, we deal with the expansion 
\[
u^\eps(x,y)=\eps^{-1}v^{-1}_{\pm}(y)+v^0_{\pm}(y)+\dots\quad\mbox{\rm in }\mathcal{S}^\eps_{\pm}.
\]
Taking the formal limit $\eps\to0^+$, we find that $v^{-1}_{\pm}$ must solve the homogeneous problem (\ref{Pb1D1})--(\ref{Pb1D2}). Under the assumption (\ref{ConditionLongueur}) for the lengths $L_{\pm}$, non zero solutions exist for this problem and we look for $v^{-1}_{\pm}$ in the form 
\[
v^{-1}_{\pm}(y)=a_{\pm}{\bf v}_{\pm}(y)\qquad\mbox{ with }\qquad a_\pm\in\mathbb{C},\ {\bf v}_{\pm}(y)=\sin(\omega (y-1)).
\]  
Let us stress that the values of $a_{\pm}$ are unknown and will be fixed during the construction of the asymptotics of $u^\eps$. 
At $A_\pm$, the Taylor formula gives
\begin{equation}\label{DefInnerA}
\eps^{-1}v^{-1}_{\pm}(y)+v^0_{\pm}(y)=0+(C_\pm^A \xi^\pm_y  +v^0_{\pm}(1))+\dots\qquad\mbox{ with }\qquad C_\pm^{A}:=a_{\pm}\partial_y {\bf v}(1)=a_{\pm}\omega.
\end{equation}
Here $\xi^\pm_y=\eps^{-1}(y-1)$ is the stretched variable introduced just before (\ref{Strechted1}). At $B_\pm^{\eps}$, we have
\begin{equation}\label{DefInnerB}
\eps^{-1}v^{-1}_{\pm}(y)+v^0_{\pm}(y)=0+(C_\pm^B \zeta^\pm_y  +v^0_{\pm}(1+L_{\pm}))+\dots
\end{equation}
with 
\begin{equation}\label{DefInnerBbis}
C_\pm^B:=-a_{\pm}\partial_y {\bf v}(1+L_{\pm})=-a_{\pm}\omega\cos(\om L_{\pm})=(-1)^{1+m_{\pm}}a_{\pm}\omega.
\end{equation}
Here, we use the stretched coordinates $\zeta^\pm=(\zeta^\pm_x,\zeta^\pm_y)=(\eps^{-1}(x-p_{\pm}),\,\eps^{-1}(1+L_{\pm}-y))$ (mind the sign of $\zeta^\pm_y$).\\
\newline
We look for an inner expansion of $u^{\eps}$ in the vicinity of $A_{\pm}$ of the form
\[
u^\eps(x)=C^A_\pm\,Y^1(\xi^\pm)+ c^A_\pm+\dots
\]
where $Y^1$ is introduced in (\ref{PolyGrowth}), $C^A_\pm$ are defined in (\ref{DefInnerA}) and $c^A_\pm$ are constants to determine. In a vicinity of $B^{\eps}_{\pm}$, we look for an inner expansion of $u^{\eps}$ of the form 
\[
u^\eps(x)=C^B_\pm\,Y^1(\zeta^\pm_x,\zeta^\pm_y+L'_{\pm})+ c^B_\pm+\dots
\]
where $C^B_\pm$ are defined in (\ref{DefInnerBbis}) and $c^B_\pm$ are constants to determine.\\
\newline 
Let us continue the matching procedure. Taking the limit $\eps\to0^+$, we find that the main term $u^0_0$ in (\ref{AnsatzWaveguides}) must solve the problem
\[
\Delta u^0_0 +\omega^2u^0_0=0\ \mbox{ in }\Pi_0,\qquad 
\partial_\nu u^0_0=0\mbox{ on }\partial\Pi_0\setminus \{A_+,A_-\},
\]
with the expansion 
\[
u^0_0=\mrm{w}^+_h+R^0\,\mrm{w}^-_h+\tilde{u}^0_0.
\]
Here $R^0\in\Cplx$ and $\tilde{u}^0_0$ decay exponentially at infinity. Moreover, we find that the term $u^0_\pm$ in (\ref{AnsatzWaveguides}) must solve the problem
\[
\Delta u^0_\pm +\omega^2u^0_\pm=0\ \mbox{ in }\Pi^{\eps}_{\pm},\qquad 
\partial_\nu u^0_\pm=0\mbox{ on }\partial\Pi^{\eps}_{\pm}\setminus B^{\eps}_{\pm},
\]
with the expansion 
\[
u^0_{\pm}(x,y)=T^0_{\pm}\,\mrm{w}^+_v(y-L^{\eps})+\tilde{u}^0_{\pm}(x,y).
\]
Here $T^0_{\pm}\in\Cplx$ and $\tilde{u}^0_{\pm}$ decay exponentially at infinity. The coefficients $R^0$, $T^0_{\pm}\in\mathbb{C}$ will provide the first terms in the asymptotics of $R^{\eps}$, $T^{\eps}_{\pm}$:
\[
R^{\eps}=R^0+\dots\qquad\mbox{ and }\qquad T^{\eps}_{\pm}=T^0_{\pm}+\dots.
\]
Matching the behaviours of the inner and outer expansions of $u^{\eps}$ in $\Pi_0$, we find that at the points $A^{\pm}$, the function $u^0_{0}$ must expand as 
\[
u^0_0(x,y)= C^A_\pm\frac{1}{\pi}\ln \frac{1}{r^{A}_\pm}+U^0_{0\pm}+O(r^{A}_\pm)
\qquad \mbox{ as }r^{A}_\pm:=((x- p_{\pm})^2+(y-1)^2)^{1/2}\rightarrow0^+,
\]
where $U^0_0$ is a constant. Observe that $u^0_0$ is singular both at $A^{+}$ and $A^{-}$. Integrating by parts in 
\[
0=\int_{\Pi_{0,\rho}}(\Delta u^0_0 +\omega^2u^0_0)(e^{+i\omega x} + e^{-i\omega x})-u^0_0\,(\Delta (e^{+i\omega x} + e^{-i\omega x}) +\omega^2(e^{+i\omega x} + e^{-i\omega x})\,dxdy,
\]
with $\Pi_{0,\rho}:=\{(x,y)\in\Pi_0\,,x>-\rho\mbox{ and }r^A_{\pm}>1/\rho\}$, and taking the limit $\rho\to+\infty$, we get $2i\om(R^0-1)+2C^A_+\cos(\om p_+)+2C^A_-\cos(\om p_-)=0$. From the expressions of $C^A_{\pm}$ (see (\ref{DefInnerA})), this gives
\begin{equation}\label{equation1}
R^0=1+i(a_+\cos(\om p_+)+a_-\cos(\om p_-)).
\end{equation}
Then matching the behaviours of the inner and outer expansions of $u^{\eps}$ in $\Pi^{\eps}_\pm$, we find that at the points $B^\eps_{\pm}$, the function $u^0_\pm$ must expand as 
\[
u^0_\pm(x,y)= C^B_\pm\frac{1}{\pi}\ln \frac{1}{r^{B}_\pm}+U^0_\pm+O(r^{B}_\pm)
\qquad \mbox{ as }r^{B}_\pm:=((x- p_{\pm})^2+(y-1-L^{\eps})^2)^{1/2}\rightarrow0^+,
\]
where $U^0_\pm$ are constants. Note that $u^0_\pm$ is singular at $B^\eps_{\pm}$. Integrating by parts in 
\[
\begin{array}{l}
\dsp0=\int_{\Pi^{\eps}_{\pm,\rho}}(\Delta u^0_\pm +\omega^2u^0_\pm)(e^{+i\omega (y-1-L^{\eps})} + e^{-i\omega(y-1-L^{\eps})})\,dxdy\\\dsp-\int_{\Pi^{\eps}_{\pm,\rho}}u^0_\pm\,(\Delta (e^{+i\omega (y-1-L^{\eps})} + e^{-i\omega (y-1-L^{\eps})}) +\omega^2(e^{+i\omega (y-1-L^{\eps})} + e^{-i\omega (y-1-L^{\eps})})\,dxdy,
\end{array}
\]
with $\Pi^\eps_{\pm,\rho}:=\{(x,y)\in\Pi^{\eps}_{\pm}\,,y<\rho\mbox{ and }r^B_{\pm}>1/\rho\}$, and taking the limit $\rho\to+\infty$, we get $2i\om T^0_\pm+2C^B_\pm\cos(\om p_\pm)=0$. From the expressions of $C^B_{\pm}$ (see (\ref{DefInnerBbis})), this gives
\begin{equation}\label{equation2}
T^0_\pm=i(-1)^{1+m_{\pm}}a_\pm\cos(\om p_\pm).
\end{equation}
Matching the constant behaviour inside $\Pi_0$, we get
\[
U^0_{0\pm}=C^A_{\pm}\,\pi^{-1}\ln\eps+c^A_{\pm}=-C^A_{\pm}\,\pi^{-1}|\ln\eps|+c^A_{\pm}.
\]
This sets the value of $c^A_{\pm}$. However $U^0_{0\pm}$ depends on $a_{\pm}$ and we have to explicit this dependence. For $u^0_0$, we have the decomposition
\begin{equation}\label{SecondDecompo}
u^0_0=\mrm{w}^+_h+\mrm{w}^-_h+C^A_+\gamma_++C^A_-\gamma_-
\end{equation}
where $\gamma_{\pm}$ are the outgoing functions such that
\begin{equation}\label{DefGamma}
\begin{array}{|rcll}
\Delta \gamma_{\pm}+\om^2\gamma_{\pm}&=&0&\mbox{ in }\Pi_0\\
\partial_\nu\gamma_{\pm}&=&\delta_{A_{\pm}}&\mbox{ on }\partial\Pi_0.
\end{array}
\end{equation}
Here $\delta_{A_{\pm}}$ stands for the Dirac delta function at $A_{\pm}$. Denote by $\Gamma_{\pm}$ the constant behaviour of $\gamma_{\pm}$ at $A_{\pm}$, that is the constant such that $\gamma_{\pm}$ behaves as 
\[
\gamma_{\pm}(x,y)= \frac{1}{\pi}\ln \frac{1}{r^{A}_\pm}+\Gamma_{\pm}+O(r^{A}_\pm)\qquad \mbox{ when }r^{A}_\pm=((x- p_{\pm})^2+(y-1)^2)^{1/2}\rightarrow0^+.
\]
In Lemma \ref{lemmaRelConstants} below, we will prove that the constant behaviours of $\gamma_{\pm}$ at $A_{\mp}$ are equal. We denote by $\tilde{\Gamma}=\gamma_{+}(A_-)=\gamma_{-}(A_+)$ the value of this coupling constant. Then from (\ref{SecondDecompo}), we derive
\[
U^0_{0\pm}=2\cos(\om p_\pm)+\om(a_\pm\Gamma_\pm+a_\mp\tilde{\Gamma}).
\]
Matching the constant behaviour at $A_{\pm}$ inside the thin strips, we obtain
\begin{equation}\label{BoundaryCondition1}
\begin{array}{lcl}
v_{\pm}^0(1)&=&C^A_{\pm}\,C_{\Xi}+c^A_{\pm} = U^0_{0\pm}+C^A_{\pm}\,(\pi^{-1}|\ln\eps|+C_{\Xi})\\[5pt]
 &=& 2\cos(\om p_\pm)+a_\mp\om\tilde{\Gamma}+a_{\pm}\om\,(\pi^{-1}|\ln\eps|+C_{\Xi}+\Gamma_\pm).
\end{array}
\end{equation}
Now, matching the constant behaviour inside $\Pi^{\eps}_\pm$, we get
\[
U^0_{\pm}=C^B_{\pm}\,\pi^{-1}\ln\eps+c^B_{\pm}=-C^B_{\pm}\,\pi^{-1}|\ln\eps|+c^B_{\pm}.
\]
This sets the value of $c^B_{\pm}$. However $U^0_{\pm}$ depends on $a_{\pm}$ and we have to explicit this dependence. For $u^0_\pm$, we have the decomposition
\[
u^0_\pm(x,y)=C^B_\pm g(x-p_{\pm},y+1+L^{\eps})
\]
where $g$ is the outgoing function such that
\begin{equation}\label{Defg}
\begin{array}{|rcll}
\Delta g+\om^2g&=&0&\mbox{ in } \Pi:=(-1/2;1/2)\times(0;+\infty) \\
\partial_\nu g&=&\delta_{O}&\mbox{ on }\partial\Pi.
\end{array}
\end{equation}
Here $\delta_{O}$ stands for the Dirac delta function at $O$. Denote by $G$ the constant behaviour of $g$ at $O$, that is the constant such that $g$ behaves as 
\[
g(x,y)= \frac{1}{\pi}\ln \frac{1}{r}+G+O(r)\qquad \mbox{ when }r=(x^2+y^2)^{1/2}\rightarrow0^+.
\]
Then we have
\[
U^0_{\pm}=C^B_\pm G=(-1)^{1+m_{\pm}}a_{\pm}\omega G.
\]
Matching the constant behaviour at $B^\eps_{\pm}$ inside the thin strips, we obtain
\begin{equation}\label{BoundaryCondition2}
\begin{array}{lcl}
v_{\pm}^0(1+L_{\pm})&=&C^B_{\pm}\,(L'_{\pm}+C_{\Xi})+c^B_{\pm} = U^0_{\pm}+C^B_{\pm}\,(\pi^{-1}|\ln\eps|+L'_{\pm}+C_{\Xi})\\[5pt]
 &=& (-1)^{1+m_{\pm}}a_{\pm}\omega \,(\pi^{-1}|\ln\eps|+L'_{\pm}+C_{\Xi}+G).
\end{array}
\end{equation}
Writing the compatibility condition so that the problem (\ref{Pb1D1}) supplemented with the boundary conditions (\ref{BoundaryCondition1})--(\ref{BoundaryCondition2}) admits a non zero solution, we get
\[
v_{\pm}^0\partial_y\textbf{v}|_1-v_{\pm}^0\partial_y\textbf{v}|_{1+L_{\pm}}-(\textbf{v}\partial_y v_{\pm}^0|_1-\textbf{v}\partial_y v_{\pm}^0|_{1+L_{\pm}})=0.
\]
Since $\textbf{v}(1)=\textbf{v}(1+L_{\pm})=0$, we obtain
\[
v_{\pm}^0(1)+(-1)^{1+m_{\pm}}v_{\pm}^0(1+L_{\pm})=0.
\]
This gives the relations
\begin{equation}\label{equation3}
2\cos(\om p_\pm)+a_\mp\om\tilde{\Gamma}+a_{\pm}\om\,(2\pi^{-1}|\ln\eps|+L'_{\pm}+2C_{\Xi}+\Gamma_\pm+G)=0.
\end{equation}
Below, we will prove that $\Im m\,(\om \Gamma_{\pm})=|\cos(\om p_{\pm})|^2$, $\Im m\,(\om G)=1$ (Lemma \ref{lemmaRelConstants}) and $C_{\Xi}\in\R$ (Lemma \ref{LemmaCReal}). Therefore \eqref{equation3} writes equivalently
\begin{equation}\label{equation4}
a_{\pm}(\beta_{\pm}+i(1+|\cos(\om p_{\pm})|^2))+a_\mp\om\tilde{\Gamma}=-2\cos(\om p_\pm)
\end{equation}
where $\beta_{\pm}$ are the real valued quantities such that
\begin{equation}\label{defBetaP}
\beta_{\pm}:=\om\,(2\pi^{-1}|\ln\eps|+L'_{\pm}+2C_{\Xi}+\Re e\,\Gamma_\pm+\Re e\,G).
\end{equation}
Identities (\ref{equation4}) form a system of two equations whose unknowns are $a_{\pm}$. Solving it, we get
\begin{equation}\label{Defapm}
a_\pm=\cfrac{2\cos(\om p_{\mp})\om\tilde{\Gamma}-2(\beta_{\mp}+i(1+|\cos(\om p_{\mp})|^2))\cos(\om p_{\pm})}{(\beta_{+}+i(1+|\cos(\om p_{+})|^2))(\beta_{-}+i(1+|\cos(\om p_{-})|^2))-\om^2\tilde{\Gamma}^2}\,.
\end{equation}
And from (\ref{equation1}), (\ref{equation2}), we obtain explicit expressions for $R^0$, $T^0_{\pm}$. This ends the asymptotic analysis. To sum up, when $\eps$ tends to zero, we have obtained the following expansions
\begin{equation}\label{AsymptoFinalResults}
\fbox{$\begin{array}{rcl}
u^{\eps}(x,y)&=&\mrm{w}^+_h(x,y)+\mrm{w}^-_h(x,y)+a_+\om\gamma_+(x,y)+a_-\om\gamma_-(x,y)+\dots \mbox{ in }\Pi_0,\\[4pt]
u^{\eps}(x,y)&=&(-1)^{1+m_{\pm}}a_{\pm}\omega g(x-p_\pm,y+1+L^{\eps})
+\dots \mbox{ in }\Pi^\eps_\pm, \\[4pt]
u^{\eps}(x,y)&=&\eps^{-1}a_\pm\sin(\om(y-1))+\dots \mbox{ in }\mathcal{S}^\eps_\pm,\\[4pt]
\multicolumn {3}{l}{R^\eps=1+i(a_+\cos(\om p_+)+a_-\cos(\om p_-))+\dots,\qquad T^\eps_\pm=i(-1)^{1+m_{\pm}}a_\pm\cos(\om p_\pm)+\dots,}
\end{array}$}
\end{equation}
where $a_\pm$ are given by (\ref{Defapm}). Here, the functions $\gamma_\pm$, $g$ are respectively introduced in (\ref{DefGamma}), (\ref{Defg}). Note in particular that when $a_\pm\ne0$, the amplitude of the field blows up in the thin slits as $\eps$ tends to zero.

\section{Analysis of the results}\label{AnalysisResults}
In this section, we explain how to use the asymptotic results (\ref{AsymptoFinalResults}) to exhibit settings where the waveguide $\Om^{\eps}$ acts as an energy distributor. The degrees of freedom we can play with are $L'_{\pm}$ and $p_\pm$, that is the lengths and the abscissa of the thin strips. \\
\newline
$\star$ When $p_{\pm}$ are chosen such that $\cos(\om p_{\pm})=0$, from (\ref{Defapm}) we get $a_{\pm}=0$. This implies $R^0=1$ and $T^0_{\pm}=0$. In this case, the energy brought to the system is almost completely backscattered and the thin strips have almost no influence on the incident field. Definitely, this is not an acoustic distributor. Roughly speaking, in this situation what happens is that the resonant eigenfunctions  associated with complex resonances existing due to the presence of the thin slits are not excited.\\
\newline
$\star$ When $p_{\pm}$ are chosen such that $\cos(\om p_{\pm})=1$, we find 
\[
a_{\pm}=\cfrac{2\om\tilde{\Gamma}-2(\beta_{\mp}+2i)}{(\beta_++2i)(\beta_-+2i)-\om^2\tilde{\Gamma}^2}\,.
\]
Then, we have 
\[
R^0=1+i(a_++a_-)=\cfrac{\beta_+\beta_-+4-\om^2\tilde{\Gamma}^2+4i\om \tilde{\Gamma}}{(\beta_++2i)(\beta_-+2i)-\om^2\tilde{\Gamma}^2}\,.
\]
and 
\[
T^0_\pm=i(-1)^{1+m_{\pm}}\cfrac{2\om\tilde{\Gamma}-2(\beta_{\mp}+2i)}{(\beta_++2i)(\beta_-+2i)-\om^2\tilde{\Gamma}^2}\,.
\]
According to Lemma \ref{lemmaRelConstants} below, we have $\om\tilde{\Gamma}=\eta+i$ for a certain $\eta\in\R$ which characterizes the coupling between the two strips. With this notation, we find
\begin{equation}\label{DefEquationEta}
R^0=\cfrac{\beta_+\beta_-+1-\eta^2+2i\eta}{\beta_+\beta_-+2i(\beta_++\beta_-)-3-\eta^2-2i\eta},\qquad T^0_\pm=\cfrac{2i(-1)^{m_{\pm}}(\beta_{\mp}+i-\eta)}{\beta_+\beta_-+2i(\beta_++\beta_-)-3-\eta^2-2i\eta}\,.
\end{equation}
Notice that the denominator in the expressions for $R^0$, $T^0_{\pm}$ cannot vanish. Indeed it vanishes if and only if $\beta_++\beta_-=\eta$ and  $\beta_+\beta_-=3+\eta^2$. One can verify that this cannot occur for $(\beta_+,\beta_-)\in\R^2$.\\
\newline
If $\eta=0$ was zero, then we would have $R^0=\mathcal{R}^0$, $T^0_\pm=\mathcal{T}^0_\pm$ with 
\begin{equation}\label{CoefScaDoubleApp}
\mathcal{R}^0=\cfrac{\beta_+\beta_-+1}{\beta_+\beta_-+2i(\beta_++\beta_-)-3},\qquad \mathcal{T}^0_\pm=\cfrac{2i(-1)^{m_{\pm}}(\beta_{\mp}+i)}{\beta_+\beta_-+2i(\beta_++\beta_-)-3}\,.
\end{equation}
In particular, we would have $R^0=0$ for all pairs $(\beta_+,\beta_-)\in\R^2$ such that 
\begin{equation}\label{ConditionBeta}
\beta_+\beta-=-1,
\end{equation}
and then $|T_+^0(\beta_+,\beta_-)|^2+|T_-^0(\beta_+,\beta_-)|^2=1$ (conservation of energy) as well as 
\begin{equation}\label{DefRatio}
\cfrac{|T_+^0(\beta_+,\beta_-)|}{|T_-^0(\beta_+,\beta_-)|}=\sqrt{\cfrac{\beta_-^2+1}{\beta_+^2+1}}=\sqrt{\cfrac{1/\beta_+^2+1}{\beta_+^2+1}}\,.
\end{equation}
Observing in (\ref{defBetaP}) that $\beta_+$ varies in $\R$ when $L'_\pm$ runs in $\R$, we see that the ratio (\ref{DefRatio}) could take any value in $(0;+\infty)$.\\
\newline
However, the coupling constant $\eta$ cannot be chosen as we wish because we have already set $p_\pm$ to impose $\cos(\om p_\pm)=1$. In Lemma \ref{lemmaRelConstants} below, we will prove that the function $e_{\pm}:=\gamma_{\pm}-s_{\pm}\mrm{w}^-_h$, with $s_{\pm}=i\cos(\om p_\pm)/\om$, is real and exponentially decaying at infinity. As a consequence, we infer that for $p_-<p_+$ with $|p_+-p_-|$ large, we have 
\[
\om\tilde{\Gamma}=\om\gamma_{+}(p_-,1)\approx \om s_{+}\mrm{w}^-_h(p_-)=i\cos(\om p_+)e^{-i\om p_-}.
\]
With the above choice for $p_{\pm}$ as well as the relation (\ref{ConditionBeta}) for $\beta_{\pm}$, which translates into a condition relating $L'_{+}$ to $L'_{-}$, we have $\eta=\Re e\,(\om\tilde{\Gamma})\approx 0$. From the previous analysis, we deduce that
\begin{equation}\label{FinalDistributor}
R^0\approx 0\qquad\mbox{ together with }\qquad  \cfrac{|T_+^0(\beta_+,\beta_-)|}{|T_-^0(\beta_+,\beta_-)|}\approx \sqrt{\cfrac{\beta_-^2+1}{\beta_+^2+1}}=\sqrt{\cfrac{1/\beta_+^2+1}{\beta_+^2+1}}\,.
\end{equation}
Thus, we can get almost complete transmission and varying $L'_+$, and so in practice, varying slightly $L^{\eps}_+$ around $\pi m_+/\om$, we can control the ratio of energy transmitted respectively in $\Pi_+^\eps$ and in $\Pi_-^\eps$. We emphasize that the tuning becomes more or more subtle as $\eps$ gets smaller. More precisely, for a fixed value of $\beta_\pm$, according to (\ref{defBetaP}), the corresponding value of $L'_{\pm}$ is equal to $-2\pi^{-1}|\ln\eps|+\beta_{\pm}/\om-2C_{\Xi}-\Re e\,\Gamma_\pm-\Re e\,G$ (note in particular that it is negative for $\eps$ small enough). As a consequence, 
\[
\begin{array}{rcl}
L^{\eps}_{\pm}&=& \pi m_{\pm}/\om+\eps L'_{\pm}\\[3pt]
 &= &  \pi m_{\pm}/\om+\eps (-2\pi^{-1}|\ln\eps|+\beta_{\pm}/\om-2C_{\Xi}-\Re e\,\Gamma_\pm-\Re e\,G)
\end{array}
\]
converges to $(\pi m_{\pm}/\om)^-$ as $\eps$ tends to zero.

\begin{remark}
In the method, we tune the lengths of the slits around the resonance lengths. One may imagine to play with other parameters. For example one could work with slits of variable width
\[
\mathcal{S}^{\eps}_{\pm}=\{(x,y)\in\R^2\,|\,y\in[1;1+L_{\pm}],\,|x-p_{\pm}|< \eps H_\pm(y)\},
\]
where $H_\pm$ are smooth profile functions, and then perturb $H_\pm$. Or we could also perturb slightly the position of the slits, that is $p_{\pm}$.
\end{remark}

\section{Numerics}\label{SectionNumerics}

In this section, we illustrate the results that we have obtained above. We work with 
\[
\om=0.8\pi,\qquad p_-=-2.5,\qquad p_+=0,\qquad \eps=0.05.
\]
Observe that in this case indeed we have $\cos(\om p_+)=\cos(\om p_-)=1$. We compute numerically\footnote{The code, written in \texttt{Freefem++}, can be found at the following address 
\url{http://www.cmap.polytechnique.fr/~chesnel/Documents/EnergyDistributor.edp}.} the scattering solution $u^{\eps}$ in (\ref{MainPb}). To proceed, we use a $\mrm{P2}$ finite element method in a truncated geometry. On the artificial boundary created by the truncation, a Dirichlet-to-Neumann operator with 15 terms serves as a transparent condition (see more details for example in \cite{Gold82,HaPG98,BoLe11}). Once we have computed $u^{\eps}$, we get the scattering coefficients $R^{\eps}$, $T^{\eps}_\pm$  in the representation (\ref{Field}). \\
\newline
In the first line of Figure \ref{CompaAsympto}, we represent $R^{\eps}$, $T^{\eps}_+$, $T^{\eps}_-$ as functions of $(L^{\eps}_+,L^{\eps}_+)$. In the second line of Figure \ref{CompaAsympto}, we display $\mathcal{R}^{0}$, $\mathcal{T}^{0}_+$, $\mathcal{T}^{0}_-$ (see (\ref{CoefScaDoubleApp})) as functions of $(\beta_+,\beta_-)$. We observe that the true scattering coefficients and their asymptotic approximations coincide very well. From these results, we extract the curve $(L^{\eps}_+,L^{\eps}_-)$ where  $|R^{\eps}(L^{\eps}_+,L^{\eps}_-)|$ is minimum (see Figure \ref{FigExtracted}). We note that indeed we can have almost no reflection. Additionally, we show the quantities $|T^{\eps}_{\pm}(L^{\eps}_+,L^{\eps}_-)|$ on this curve. As predicted, we notice that we can indeed control the ratio of energy transmitted in the channels $\Pi^\eps_+$, $\Pi^\eps_-$. Finally, in Figure \ref{Fields}, we represent the field $u^\eps$ in different geometries: one without particular tuning of the lengths of the slits, one where $R^{\eps}\approx0$, $|T_+^{\eps}|\approx1$ and $T_-^{\eps}\approx0$ and one where $R^{\eps}\approx0$, $T_+^{\eps}\approx0$ and $|T_-^{\eps}|\approx1$. Our device indeed can act as an acoustic energy distributor.

\begin{remark}
We emphasize that the asymptotic approximation gets more and more accurate as $\eps$ tends to zero. Numerically however, we observe a quite good energy transmission for $\eps$ not that small. This is interesting to confer some robustness to the device with respect to perturbations of the geometry. Indeed, when $\eps$ is very small, the lengths of the slits  must be tuned very precisely to prevent backscattering of energy.
\end{remark}

\begin{figure}[!ht]
\centering
\includegraphics[width=0.32\textwidth]{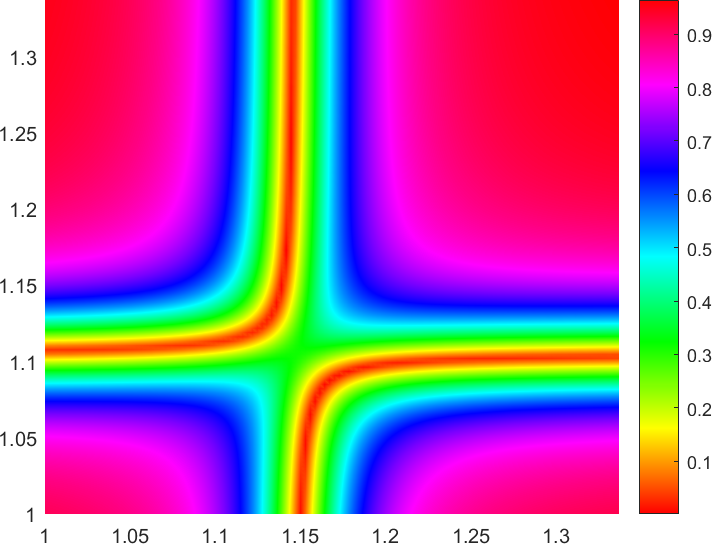}\ \includegraphics[width=0.32\textwidth]{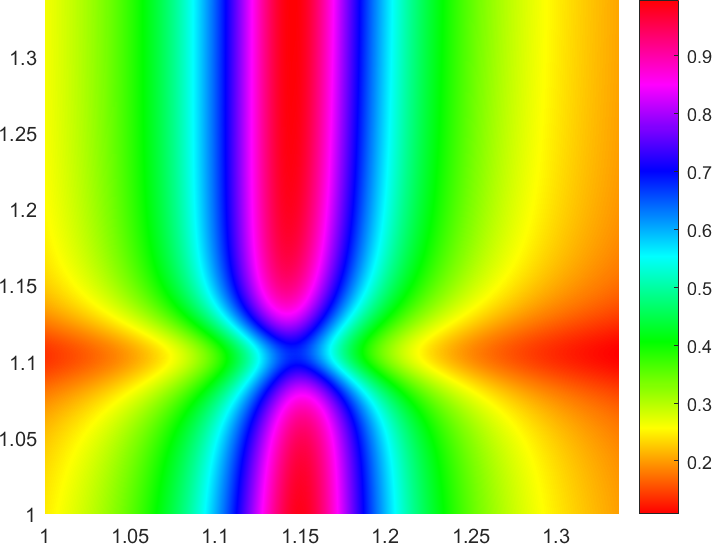}\ \includegraphics[width=0.32\textwidth]{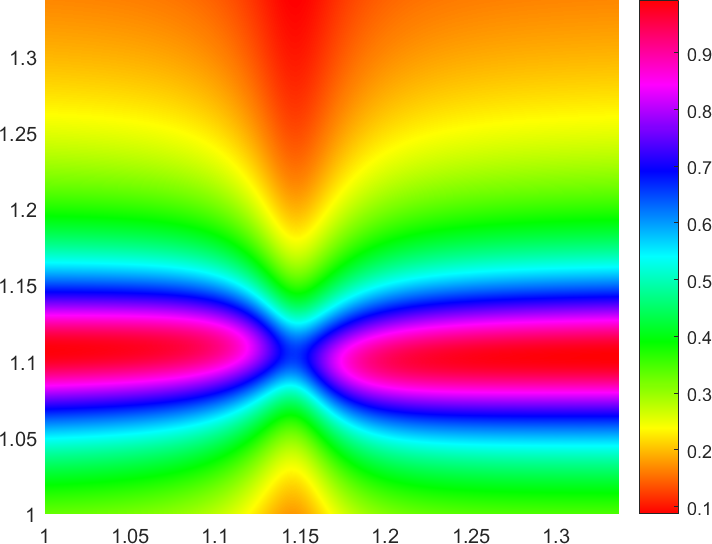}\\
\includegraphics[width=0.32\textwidth]{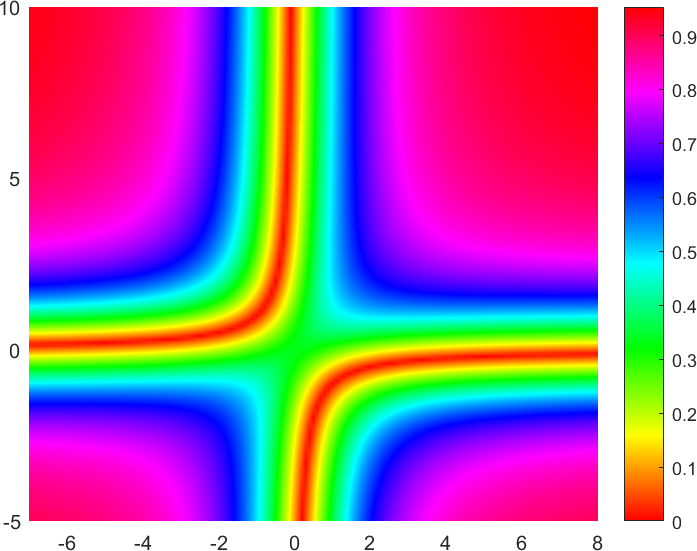}\ \includegraphics[width=0.32\textwidth]{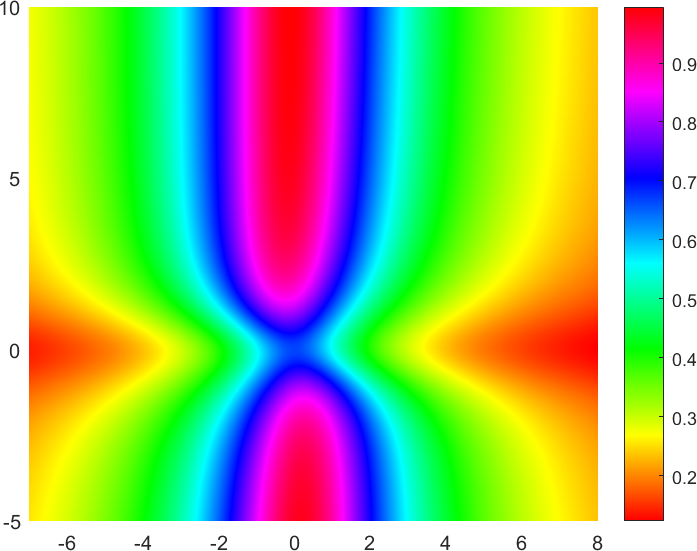}\ \includegraphics[width=0.32\textwidth]{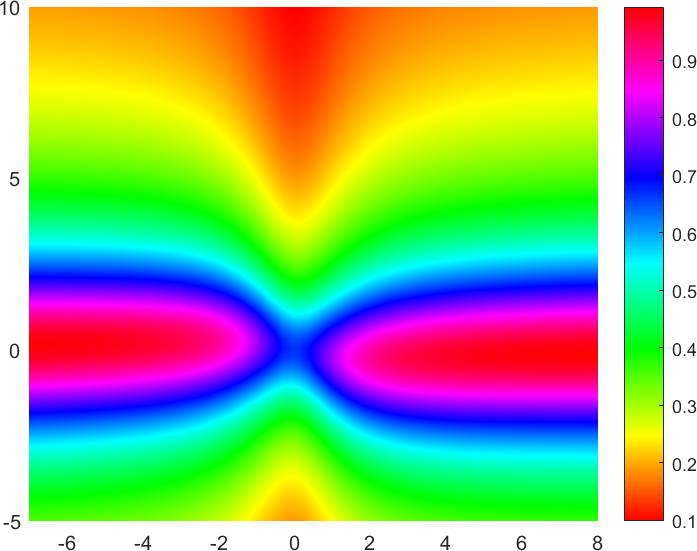}\\
\caption{On the first line, from left to right, we represent $R^{\eps}$, $T^{\eps}_+$, $T^{\eps}_-$ as functions of $(L^{\eps}_+,L^{\eps}_-)$. On the second line, from left to right, we represent $\mathcal{R}^{0}$, $\mathcal{T}^{0}_+$, $\mathcal{T}^{0}_-$ as functions of $(\beta_+,\beta_-)$. The true scattering coefficients and their principal asymptotic approximations coincide very well.\label{CompaAsympto}}
\end{figure}

\begin{figure}[!ht]
\centering
\includegraphics[width=0.8\textwidth]{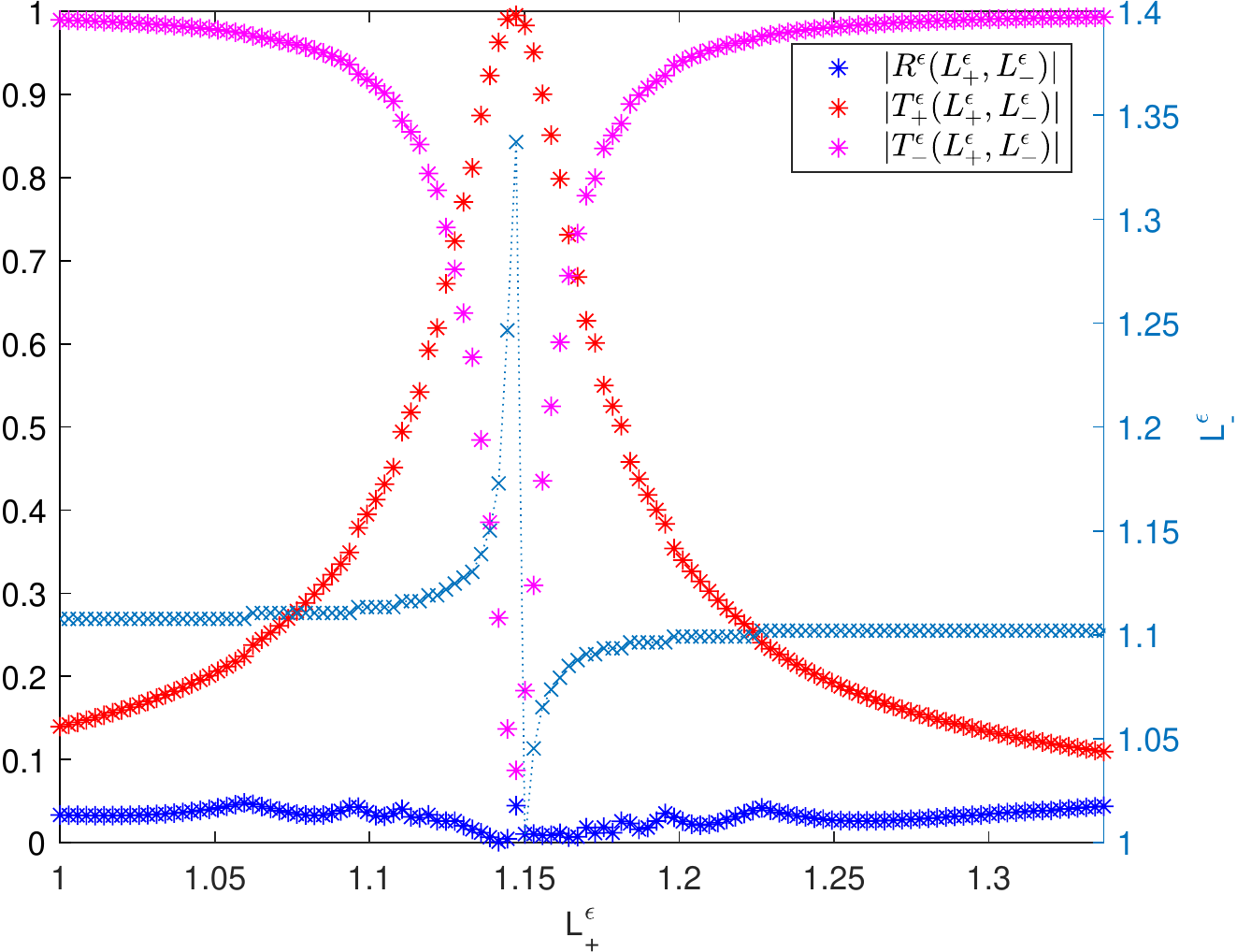}
\caption{From the results of Figure \ref{CompaAsympto}, we extract the curve $(L^{\eps}_+,L^{\eps}_-)$ where  $|R^{\eps}(L^{\eps}_+,L^{\eps}_-)|$ is minimum. Then we display $|T^{\eps}_{\pm}(L^{\eps}_+,L^{\eps}_-)|$ on this curve. \label{FigExtracted}}
\end{figure}

\begin{figure}[!ht]
\centering
\includegraphics[width=0.48\textwidth]{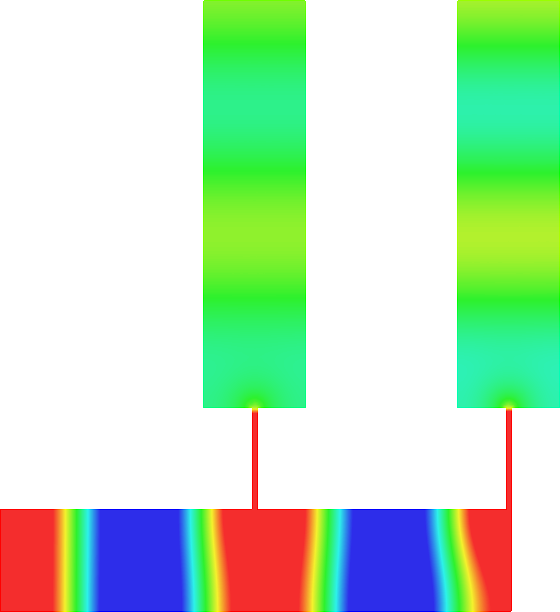}\\[30pt]
\includegraphics[width=0.48\textwidth]{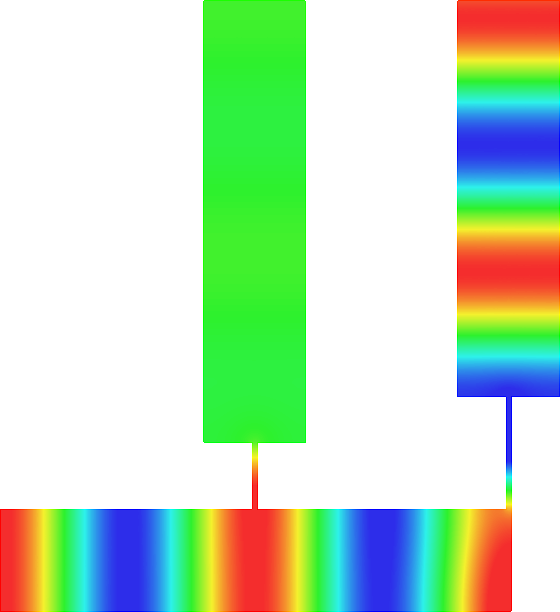}\quad\includegraphics[width=0.48\textwidth]{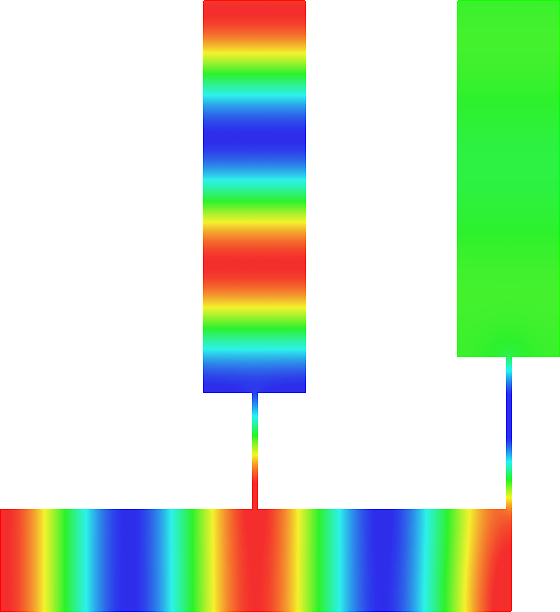}
\caption{First line: without particular tuning of the lengths of the slits, the energy of the incoming wave is almost completely backscattered. Second line, left: setting where $R^{\eps}\approx0$, $T_+^{\eps}\approx0$ and $|T_-^{\eps}|\approx1$. Second line, right: setting where $R^{\eps}\approx0$, $|T_+^{\eps}|\approx1$ and $T_-^{\eps}\approx0$. \label{Fields}}
\end{figure}

\newpage
\clearpage

\section{Concluding remarks}\label{SectionConclusion}

\begin{figure}[!ht]
\centering
\begin{tikzpicture}[scale=1.2]
\draw[fill=gray!30,draw=none](-2,0) rectangle (0,1);
\draw[fill=gray!30,draw=none](-2,0) rectangle (0,1);
\draw (0,0)--(0,1);
\draw[fill=gray!30,draw=none](-0.2,0) rectangle (1,0.1);
\draw[fill=gray!30,draw=none](-0.2,0.9) rectangle (1,1);
\draw (0,0.1)--(1,0.1);
\draw (0,0.9)--(1,0.9);
\draw (-2,1)--(1,1);
\draw (-2,0)--(1,0);
\draw[fill=gray!30,draw=none](2.9,0.4) rectangle (0.9,-0.6);
\draw[fill=gray!30,draw=none](3,0.5) rectangle (1,1.5);
\draw (3,0.5)--(1,0.5);
\draw (3,1.5)--(1,1.5);
\draw (2.9,0.4)--(0.9,0.4);
\draw (2.9,-0.6)--(0.9,-0.6);
\draw (1,0.5)--(1,0.9);
\draw (1,1.5)--(1,1);
\draw (0.9,-0.6)--(0.9,0);
\draw (0.9,0.1)--(0.9,0.4);
\draw[dashed] (-2,1)--(-2.5,1);
\draw[dashed] (-2,0)--(-2.5,0);
\draw[dashed] (3,0.5)--(3.5,0.5);
\draw[dashed] (3,1.5)--(3.5,1.5);
\draw[dashed] (2.9,0.4)--(3.4,0.4);
\draw[dashed] (2.9,-0.6)--(3.4,-0.6);
\end{tikzpicture}
\caption{A device which fails to behave as an acoustic distributor. \label{InoperantDomain}} 
\end{figure}
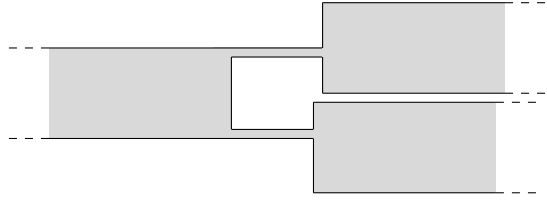

\noindent $i)$  From this analysis, a natural idea is to try to work in the waveguide represented in Figure \ref{InoperantDomain} with two slits at the end of the input channel. In this case, the asymptotic expansion is similar to the one above and we have $\cos(\om p_{\pm})=1$ in (\ref{Defapm}), (\ref{AsymptoFinalResults}). But the problem is that in this geometry we cannot reduce the coupling constant $\eta$ (see (\ref{DefEquationEta})) as we wish, which is a very important point. As a consequence, this device is not interesting for our purpose.\\
\newline
$ii)$ We considered straight vertical slits to simplify the presentation and to limit the complexity of the notation. We could have worked similarly  with slits coinciding at the limit $\eps\to0$ with some smooth curves, we would have observed the same phenomena. What matters is the lengths of the slits. Additionally, the orientation of the slits (if they are not vertical) plays no major role.\\ 
\newline
$iii)$ A priori the approach presented here can also be used to construct an acoustic energy distributor with one input/output channel and more than two output channels. The question of working at higher wavenumber, so that several modes can propagate, seems less simple to address. Indeed, in this case, there are more than three scattering coefficients coming into play, and even though they are related by some structure (due to conservation of energy, reciprocity relations, ...), it appears difficult 
to control them with only two slits. A natural idea is to work with more slits. But then the coupling effects, whose dependence with respect to the geometry is not very explicit, become hard to handle.\\
\newline
$iv)$ What was done in 2D here could be adapted in 3D. The asymptotic expansion would be different, in particular because the equivalent of the function $Y^1$ introduced in (\ref{PolyGrowth}) has a different behaviour at infinity in 3D, but the methodology would be the same (see \cite{NaCh21a,NaCh21b} for related works). \\
\newline
$v)$ On the contrary, the approach proposed in this article is very specific to Neumann Boundary Conditions (BCs) and cannot be adapted for Dirichlet BCs (quantum waveguides). Indeed, with Dirichlet BCs nothing passes through the thin slits and almost all the incoming energy is backscattered. Therefore, to design an energy distributor with Dirichlet BCs, it is necessary to find a different idea. The problem seems even more open with other types of BCs, for example with BCs modelling realistic materials with losses.

\section*{Appendix: auxiliary results}\label{Appendix}

\begin{lemma}\label{LemmaCReal}
The constant $C_{\Xi}$ is real.
\end{lemma}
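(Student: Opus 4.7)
The plan is to use a uniqueness/Liouville-type argument based on complex conjugation. The point is that the boundary value problem (\ref{PbBoundaryLayer}) together with the prescribed behaviour (\ref{PolyGrowth}) has real coefficients and real leading-order terms ($\xi_y$ in $\Xi^+$ and $\pi^{-1}\ln(1/|\xi|)$ in $\Xi^-$); only the constant $C_\Xi$ could, a priori, be complex. If I can show that $Y^1$ is forced to be real-valued, then in particular $C_\Xi\in\R$.

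First I would form $W:=Y^1-\overline{Y^1}$. Since the Laplacian commutes with complex conjugation and the Neumann boundary condition is real, $\overline{Y^1}$ also solves $-\Delta Y=0$ in $\Xi$ with $\partial_\nu Y=0$ on $\partial\Xi$, and from (\ref{PolyGrowth}) I read off
\[
\overline{Y^1}(\xi)=\xi_y+\overline{C_\Xi}+O(e^{-\pi\xi_y})\text{ in }\Xi^+,
\qquad
\overline{Y^1}(\xi)=\tfrac{1}{\pi}\ln\tfrac{1}{|\xi|}+O(1/|\xi|)\text{ in }\Xi^-.
\]
Subtracting, $W$ is harmonic on $\Xi$, satisfies $\partial_\nu W=0$ on $\partial\Xi$, tends to the constant $2i\,\Im m\,C_\Xi$ exponentially fast as $\xi_y\to+\infty$ in $\Xi^+$, and tends to $0$ with rate $O(1/|\xi|)$ as $|\xi|\to\infty$ in $\Xi^-$.

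Next I would apply Green's formula on the truncated domain $\Xi_R:=\Xi\cap B(0,R)$ (semicircle of radius $R$ in $\Xi^-$, and the strip cut at height $\xi_y=R$ in $\Xi^+$):
\[
\int_{\Xi_R}|\nabla W|^2\,d\xi=\int_{\partial\Xi_R}\overline{W}\,\partial_\nu W\,d\sigma.
\]
On the Neumann part of the boundary this vanishes. On the semicircular arc in $\Xi^-$, $W=O(1/R)$ and $\partial_r W=O(1/R^2)$, so the contribution is $O(1/R)\to0$. On the cross-section at height $R$ in the strip, $W=2i\,\Im m\,C_\Xi+O(e^{-\pi R})$ and $\partial_{\xi_y}W=O(e^{-\pi R})$, so the integral over this segment of length $1$ is $O(e^{-\pi R})\to 0$. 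Sending $R\to+\infty$ yields $\int_\Xi|\nabla W|^2\,d\xi=0$, hence $W$ is constant on $\Xi$.

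Finally, since $W\to 0$ in $\Xi^-$, the constant must be zero, so $W\equiv 0$ on $\Xi$ and therefore $\Im m\,C_\Xi=0$. The only genuine subtlety in the argument is justifying that the remainder estimates in (\ref{PolyGrowth}) may be differentiated, so that $\partial_{\xi_y}W$ has the exponential decay claimed at $+\infty$ and the $O(|\xi|^{-2})$ decay at $-\infty$; this follows from standard elliptic interior/boundary regularity applied to the harmonic function $W$ on neighbourhoods of fixed size (Caccioppoli estimates), together with the standard Fourier series expansion of harmonic Neumann data in the semi-strip which shows that the exponentially decaying remainder separates as $\sum_{k\geq 1}c_k\cos(\pi k\xi_x)e^{-\pi k\xi_y}$, a series one can differentiate term by term.
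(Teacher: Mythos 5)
Your proof is correct. It differs from the paper's in a small but genuine way: the paper applies the second Green identity to the pair $(Y^1,\overline{Y^1})$ on the truncated domain $\Xi_\rho$, so that the only surviving boundary contribution as $\rho\to+\infty$ comes from the cross-section of the semi-strip, where $\partial_\nu Y^1\to 1$ and the linearly growing terms cancel, leaving $C_\Xi-\overline{C_\Xi}=0$ directly; there is no Liouville step. You instead apply the first Green (energy) identity to $W=Y^1-\overline{Y^1}$, deduce $\int_\Xi|\nabla W|^2=0$, conclude $W$ is constant, and then identify the constant as $0$ from the decay in $\Xi^-$. Your route proves slightly more -- namely that $Y^1$ is real-valued on all of $\Xi$, not merely that $C_\Xi\in\R$ -- at the cost of two extra steps (the Liouville argument and the normalisation of the constant via $\Xi^-$, which uses that the expansion in $\Xi^-$ carries no free additive constant). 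You are also right that both arguments tacitly require the remainders in (\ref{PolyGrowth}) to be differentiable with the same decay rates; your justification via the separated Fourier expansion $\sum_{k\ge1}c_k\cos(\pi k(\xi_x+1/2))e^{-\pi k\xi_y}$ in the semi-strip and elliptic estimates in the half-plane is the standard one and is what the paper leaves implicit. Both proofs are sound; the paper's is marginally shorter, yours yields the stronger conclusion.
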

\begin{proof}
Let $Y^1$ be the function introduced in (\ref{PolyGrowth}). For all $\rho>0$, we have 
\[
0=\int_{\Xi_\rho} (Y^1-\overline{Y^1})\Delta Y^1-\Delta(Y^1-\overline{Y^1})Y^1\,d\xi_xd\xi_y
\]
with $\Xi_\rho:=\{(\xi_x,\xi_y)\in\Xi,\,\xi_y<0\mbox{ and }|\xi|<\rho\}\cup \{(\xi_x,\xi_y)\in(-1/2;1/2)\times[0;\rho)\}$. Integrating by parts and taking the limit $\rho\to+\infty$, we get $C_\Xi-\overline{C_\Xi}=0$. This shows that $C_\Xi$ is real.
\end{proof}

\begin{lemma}\label{lemmaRelConstants}
The constant behaviours of $\gamma_{\pm}$ at $A_{\mp}$ are equal. We denote by $\tilde{\Gamma}\in\Cplx$ this constant. We have $\Im m\,(\om\tilde{\Gamma})=\cos(\om p_+)\cos(\om p_-)$. The constants $\Gamma_{\pm}$ are such that $\Im m\,(\om\Gamma_{\pm})=|\cos(\om p_\pm)|^2$. 
\end{lemma}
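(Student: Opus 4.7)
The proof plan is to apply Green's second identity three times on the truncated subdomain
\[
\Pi_0^{R,\rho} := (\Pi_0 \cap \{x > -R\}) \setminus \bigl(\overline{D(A_+,\rho)} \cup \overline{D(A_-,\rho)}\bigr),
\]
where $D(A_\pm,\rho)$ is the open half-disk of radius $\rho$ around $A_\pm$ contained in $\Pi_0$, and then to let $\rho\to 0^+$ and $R\to +\infty$. In every application the bulk integrand vanishes because both functions satisfy $(\Delta+\om^2)u=0$, and the homogeneous Neumann condition on $\partial\Pi_0\setminus\{A_\pm\}$ (the right wall $\{x=0\}$, the bottom $\{y=0\}$, and the portions of the top $\{y=1\}$ outside the two removed half-disks) kills the contributions on those parts of the boundary. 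Thus Green's identity reduces to a balance between the two semicircles around $A_\pm$ and the far-field segment $\{x=-R\}\times(0,1)$.

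For the reciprocity $\gamma_+(A_-)=\gamma_-(A_+)$, I would apply the identity to $(\gamma_+,\gamma_-)$. Using the singular expansion $\gamma_\pm(z)=\pi^{-1}\log(1/r^A_\pm)+\Gamma_\pm+O(r^A_\pm)$ near $A_\pm$, the semicircle around $A_+$ contributes $-\gamma_-(A_+)$ in the limit $\rho\to 0$ (the singular $\partial_\nu\gamma_+\sim 1/(\pi r)$ extracts the regular value of $\gamma_-$), while the semicircle around $A_-$ contributes $+\gamma_+(A_-)$. The far-field integrand, using the outgoing behaviour $\gamma_\pm\sim s_\pm e^{-i\om x}$ (no incoming $e^{+i\om x}$ component), simplifies to $i\om s_+s_-e^{2i\om R}-i\om s_-s_+e^{2i\om R}=0$ modulo corrections that are exponentially small as $R\to+\infty$. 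Hence $\gamma_+(A_-)=\gamma_-(A_+)=:\tilde\Gamma$.

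For the imaginary-part formulae I would apply the identity to $(\gamma_+,\overline{\gamma_-})$, admissible because $\om\in\R$ implies $\overline{\gamma_-}$ also satisfies Helmholtz. The semicircle contributions then give $\tilde\Gamma-\overline{\tilde\Gamma}=2i\,\Im m\,\tilde\Gamma$; the far-field integrand, no longer annihilated because $e^{-i\om x}\cdot\overline{e^{-i\om x}}=1$ is non-oscillatory, computes to $-2i\om\,s_+\overline{s_-}$, yielding $\Im m\,\tilde\Gamma=\om\, s_+\overline{s_-}$. The same argument applied to $(\gamma_\pm,\overline{\gamma_\pm})$, where only one singular point is present but both functions now carry the same logarithmic leading term, produces $2i\,\Im m\,\Gamma_\pm$ from the semicircle and $-2i\om|s_\pm|^2$ from the far-field, giving $\Im m\,\Gamma_\pm=\om|s_\pm|^2$. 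Substituting the value $s_\pm=i\cos(\om p_\pm)/\om$ then delivers $\Im m(\om\tilde\Gamma)=\cos(\om p_+)\cos(\om p_-)$ and $\Im m(\om\Gamma_\pm)=|\cos(\om p_\pm)|^2$.

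The value $s_\pm=i\cos(\om p_\pm)/\om$, together with the realness and exponential decay at infinity of $e_\pm:=\gamma_\pm-s_\pm\mrm{w}^-_h$, would be established as a preliminary step by a Green identity applied to the pairs $(\gamma_\pm,\mrm{w}^+_h)$ and $(\gamma_\pm,\mrm{w}^-_h)$: one picks up a non-trivial boundary contribution $\pm i\om\int_0^1\gamma_\pm(0,y)\,dy$ on the right wall $\{x=0\}$, and combining the two plane-wave test functions eliminates this unknown and fixes $s_\pm$. The main technical obstacle throughout is the careful bookkeeping on the semicircles: one must track both the leading $\pi^{-1}\log(1/r)$ term and the subleading constant term in each $\gamma$, use the correct outward normal $\partial_\nu=-\partial_r$ pointing from $\Pi_0^{R,\rho}$ into the removed half-disk, and observe that the semicircle has angular measure $\pi$, so that the factor $\int 1/(\pi\rho)\cdot\rho\,d\theta$ delivers exactly one copy of the constant term in the limit $\rho\to 0$.
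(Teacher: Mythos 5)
Your proposal is correct and follows essentially the same route as the paper: Green/energy identities on the truncated half-strip applied to the pairs $(\gamma_+,\gamma_-)$, $(\gamma_\pm,\mrm{w}^+_h+\mrm{w}^-_h)$ and $(\gamma_\pm,\overline{\gamma_\pm})$, with the same bookkeeping of the logarithmic singularities on the small semicircles and of the outgoing behaviour at $x=-\infty$. The only (harmless) deviation is that you obtain $\Im m\,(\om\tilde{\Gamma})=\cos(\om p_+)\cos(\om p_-)$ directly from the identity for $(\gamma_+,\overline{\gamma_-})$, whereas the paper deduces it from the fact that $e_\pm=\gamma_\pm-s_\pm\mrm{w}^-_h$ is real (which itself requires the purely imaginary value of $s_\pm$ plus a uniqueness argument, not just a Green identity as your last paragraph suggests); both yield the stated formula.
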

\begin{proof}
Since the functions $\gamma_{\pm}$ are outgoing at infinity, we have the expansions
\[
\gamma_{\pm}=s_{\pm} \mrm{w}^-_h+\tilde{\gamma}_{\pm}
\]
where $s_{\pm}\in\Cplx$ and $\tilde{\gamma}_{\pm}$ are exponentially decaying at infinity. We have
\[
0=\int_{\Pi_{0,\rho}}(\Delta \gamma_+ +\omega^2\gamma_+)\gamma_--\gamma_+\,(\Delta \gamma_- +\omega^2\gamma_-)\,dxdy,
\]
with $\Pi_{0,\rho}=\{(x,y)\in\Pi_0\,,x>-\rho\mbox{ and }r^A_{\pm}>1/\rho\}$. Integrating by parts and taking the limit $\rho\to+\infty$, we find that the constant behaviours of $\gamma_{\pm}$ at $A_{\mp}$ are equal. On the other hand, integrating by parts in
\[
0=\int_{\Pi_{0,\rho}}(\Delta \gamma_{\pm} +\omega^2\gamma_{\pm})(e^{+i\omega x} + e^{-i\omega x})-\gamma_{\pm}\,(\Delta (e^{+i\omega x} + e^{-i\omega x}) +\omega^2(e^{+i\omega x} + e^{-i\omega x}))\,dxdy,
\]
and taking the limit $\rho\to+\infty$, we obtain 
\begin{equation}\label{relationCoefSca}
s_{\pm}=i\cos(\om p_\pm)/\om.
\end{equation}
Then one can verify that the function $e_{\pm}:=\gamma_{\pm}-s_{\pm}\mrm{w}^-_h$ is real. Indeed $e_{\pm}-\overline{e_{\pm}}$ is exponentially decaying and solves the homogeneous problem. We deduce that $\Im m\,(\om\tilde{\Gamma})=\cos(\om p_+)\cos(\om p_-)$. Finally, integrating by parts in
\[
0=\int_{\Pi_{0,\rho}}(\Delta \gamma_{\pm} +\omega^2\gamma_{\pm})\overline{\gamma_{\pm}}-\gamma_{\pm}\,(\Delta \overline{\gamma_{\pm}} +\omega^2\overline{\gamma_{\pm}})\,dxdy,
\]
and taking again the limit $\rho\to+\infty$, we obtain $2i\om|s_{\pm}|^2-2\Im m\,\Gamma_{\pm}=0$. From (\ref{relationCoefSca}), this yields the desired result.
\end{proof}

\section*{Acknowledgments} 
The work of S.A. Nazarov was supported by the Ministry of Science and Higher Education of Russian Federation within the framework of the Russian State Assignment under contract No.  FFNF-2021-0006.

\bibliography{Bibli}
\bibliographystyle{plain}
\end{document}